\documentclass[11pt]{article}
\usepackage[margin=1in]{geometry}
\usepackage{amsmath,amssymb,amsthm}
\usepackage{booktabs}
\usepackage[hidelinks]{hyperref}
\newtheorem{prop}{Proposition}
\newcommand{\comm}[2]{[#1,#2]}
\newcommand{\Tr}{\operatorname{Tr}}
\newcommand{\End}{\operatorname{End}}
\title{A commutant gate for spectral fitting through symmetry forced degeneracy}
\author{Stelios Savva\\ \small Independent researcher, Nicosia, Cyprus}
\date{August 2026}
\begin{document}
\maketitle

\begin{abstract}
Learned spectral models fail at symmetry forced degenerate sectors for two
distinct reasons. Where symmetry forces a multiplet of levels to sit at exactly
the same value, the per level observable one would normally fit is not even well
defined, since every unit vector spanning that shared space is an equally valid
eigenvector; and near a crossing that symmetry protects, the eigenvector
observable gradient carries a factor $1/(\lambda_i-\lambda_j)$ that is genuinely
singular as the gap closes rather than merely large. The common response is to
regularize the divergence with a fixed $\varepsilon$ or to threshold the gap and
fall back to a lumped estimate below it. Both carry a real cost, which we
quantify later against a policy that reads the correct multiplet structure
directly: the naive per level fit targets a quantity that is not even well
defined on a forced sector, and the gap threshold fit, even when handed the true
block locations for free, can either merge two genuinely distinct levels or fail
to protect one that is actually forced, since a single fixed gap cannot serve
both jobs. We show a different fix, on synthetic
operator families with a known symmetry answer key. A gate reads the symmetry
structure directly from the observed operators, as the linear commutant of the
family, one singular value decomposition nullspace, following the error
controlled simultaneous block diagonalization of Maehara and Murota, and block
identity is read from the centre of that commutant rather than from eigenvalue
clustering, which makes the forced versus accidental distinction structural
rather than metric. The fitting objective switches between a projector trace
through forced blocks and a per level target elsewhere, and is piecewise
differentiable given that discrete decision. Under operator estimation noise the
gate classifies correctly to $\varepsilon\approx0.3$, where energy clustering
already fails by $\varepsilon=0.02$. Gated fitting reaches the truth at machine
precision in both the symmetric and the symmetry breaking regime, in the latter
converging to a truth that is a singularity of the ungated objective, and in loop
observable bias sits at the noise floor on both forced and accidental sectors.
Every step is a singular value decomposition or an eigenproblem, verified at
$n=24$ in under one second. Validation off the regular representation, where
multiplicity and dimension separate for the first time, exposed and eliminated
two defective estimators that all regular representation tests had passed. The
failure this addresses occurs wherever a fast surrogate is fitted to a parametric
eigenvalue problem that carries symmetry, which includes nuclear structure,
molecular electronic structure, and band structure calculations, and a
demonstration on a physical system is the natural next step. The demonstrated
object here is a gated estimator with a low dimensional trained parameter; a
full parametric matrix model, in which the matrices themselves are learned, is
not demonstrated here and is stated as the immediate next experiment. We state
the conditions under which each claim holds.
\end{abstract}

\section{The problem}
The setting is the one a fast surrogate is fitted in: a physical system whose
Hamiltonian, or whichever operator carries the spectrum of interest, depends on
a parameter that is swept, and whose symmetry pins certain levels together so
that they coincide exactly rather than approximately. The observed data are the
operators of that system, and the object being fitted is a low dimensional
parameter of a model built from them. We write the family as follows.

Let $M(x)=O_0+xO_1$ be a parametric family of Hermitian operators on
$\mathbb{C}^n$, with $O_0$ and $O_1$ drawn from the commutant of a unitary
representation $U$ of a finite group $G$. The representation and its embedding
are withheld from the method. Only noisy generators
$\tilde O_i=O_i+\varepsilon H_i$ are observed, with $H_i$ independent, drawn from
a Gaussian unitary type ensemble, and normalized in the Frobenius norm; see
Section~\ref{sec:assumptions} on this noise model. Because
$\comm{U(g)}{M(x)}=0$ for all $x$, three phenomena occur together. Every irrep
block of dimension $d\ge2$ forces an exactly $d$ fold degenerate multiplet that
persists across the sweep. Levels in different symmetry sectors may cross at
isolated $x^\ast$, a protected crossing in the sense of von Neumann and Wigner
\cite{vNW1929}. Unrelated levels may coincide by accident.

A model fitted on per level spectral data meets two distinct failures at such
sectors. They must be kept apart, because the gate repairs a different thing in
each. Neither proposition below is new mathematics; the first is Schur's lemma
applied to this setting and the second is first order perturbation theory. What
the pair establishes is that the two failures have opposite characters, and
therefore require opposite repairs, which is what the gate provides.

\begin{prop}[Symmetric direction: gauge ambiguity, no divergence]
\label{prop:sym}
Let $P$ project onto a forced multiplet on which the restricted representation
is irreducible, and let the model direction $D=\partial_\theta M$ satisfy
$\comm{U(g)}{D}=0$ for all $g$. Then $PDP=cP$ for a scalar $c$, the intra
multiplet matrix elements $\langle v_i|D|v_j\rangle$ with $i\ne j$ vanish in
every orthonormal basis of $\operatorname{ran}P$, and the per level gradient of
any observable is finite. The failure is instead that the per level target
$\langle v|A|v\rangle$ is gauge ambiguous, since every unit vector of
$\operatorname{ran}P$ is an eigenvector.
\end{prop}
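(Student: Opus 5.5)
The plan is to reduce everything to Schur's lemma on $\operatorname{ran}P$ and then read off the three claims. First I will check that $P$ commutes with every $U(g)$. A forced multiplet is the eigenspace of $M(x)$ for the shared eigenvalue, or more precisely the irrep-block subspace identified by the commutant structure. Since $\comm{U(g)}{M(x)}=0$, each $U(g)$ preserves every eigenspace of $M(x)$, so the spectral projector onto it commutes with $U(g)$. Hence $\operatorname{ran}P$ is $U$-invariant, and by hypothesis the restricted representation $U|_{\operatorname{ran}P}$ is irreducible.

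Second, I will apply Schur's lemma to the compressed operator $PDP$, viewed as an endomorphism of $\operatorname{ran}P$. The key computation is
\[
U(g)\,PDP = P\,U(g)\,D\,P = PDP\,U(g),
\]
which uses $\comm{U(g)}{P}=0$ and $\comm{U(g)}{D}=0$. So $PDP|_{\operatorname{ran}P}$ lies in the commutant of an irreducible unitary representation over $\mathbb{C}$, and therefore equals $cP$ for some $c\in\mathbb{C}$. Because $D$ is Hermitian, being the derivative of a Hermitian family, $c$ is real. It follows that for any orthonormal basis $\{v_i\}$ of $\operatorname{ran}P$ we have $\langle v_i|D|v_j\rangle=\langle v_i|PDP|v_j\rangle=c\,\delta_{ij}$. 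The off-diagonal elements therefore vanish independently of the choice of basis.

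Third, for the finiteness of the gradient, I will write first-order perturbation theory for an observable $\langle v_i|A|v_i\rangle$:
\[
\partial_\theta v_i=\sum_{j\ne i}\frac{\langle v_j|D|v_i\rangle}{\lambda_i-\lambda_j}\,v_j .
\]
I will split the sum into $j$ outside the multiplet and $j$ inside it. For $j$ outside, the gap $\lambda_i-\lambda_j$ is nonzero at the point considered, so those terms are finite. For $j$ inside, the denominator is zero, but by the previous step the numerator vanishes identically. Moreover, $D$ preserves the degeneracy to first order, since every level in the block shifts by the same amount $c$. So degenerate perturbation theory needs no diagonalization within the block, and the intra-block contribution is simply absent rather than a $0/0$ form. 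This is the step I expect to need the most care. The correct statement is that the degenerate subspace is stable under the perturbation, so the intra-block mixing terms are to be dropped and not evaluated as a limit. I would phrase it via the projector derivative $\partial_\theta P=\sum_{j\notin\text{block}}(\cdots)$, the standard resolvent formula, which involves only inter-block gaps and is manifestly finite.

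Finally, the gauge ambiguity is immediate. $M$ restricted to $\operatorname{ran}P$ is $\lambda P$, so every unit vector $v\in\operatorname{ran}P$ is an eigenvector. Then $\langle v|A|v\rangle$ ranges over the numerical range of $PAP$ on $\operatorname{ran}P$. This set is a single point only if $PAP\propto P$, which fails for generic, non-symmetric $A$. The per-level target is therefore not a function of the operator, whereas $\Tr(PA)$ is.
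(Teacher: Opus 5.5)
Your proof is correct and follows the paper's own argument: Schur's lemma applied to $PDP$ on the irreducible $\operatorname{ran}P$, vanishing numerators for the intra-multiplet terms of the first-order eigenvector derivative, and gauge ambiguity from $M(x)|_{\operatorname{ran}P}=\lambda P$. Your additions are refinements of points the paper's shorter proof leaves implicit, not a different route: checking $\comm{U(g)}{P}=0$, resolving the $0/0$ intra-block terms through the projector derivative (which involves only inter-block gaps), and the numerical-range argument showing the ambiguity is genuine unless $PAP\propto P$.
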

\begin{proof}
The operator $PDP$ commutes with the restricted representation
$U|_{\operatorname{ran}P}$, which is irreducible, so by Schur's lemma $PDP=cP$.
Off diagonal elements of a scalar vanish in any orthonormal basis, hence the
intra multiplet terms of the first order eigenvector derivative have zero
numerator, and that derivative is finite. Gauge ambiguity is the statement that
$M(x)|_{\operatorname{ran}P}=\lambda P$, so any orthonormal basis of
$\operatorname{ran}P$ diagonalizes the restriction.
\end{proof}

\begin{prop}[Symmetry breaking direction: singularity at a protected crossing]
\label{prop:break}
Let levels $i$ and $j$ in different symmetry sectors cross at $x^\ast$, and let
$D=B$ couple the sectors, with $\langle v_j|B|v_i\rangle\ne0$. For a simple level
$i$, first order perturbation theory gives
\[
\partial_\theta\langle v_i|A|v_i\rangle
=2\,\mathrm{Re}\!\!\sum_{k\ne i}
\frac{\langle v_i|A|v_k\rangle\,\langle v_k|\partial_\theta M|v_i\rangle}
{\lambda_i-\lambda_k},
\]
whose $(i,j)$ term diverges as the gap closes. If the true system is exactly
symmetric, so that $\theta^\ast_{\mathrm{break}}=0$, the model gap at $x^\ast$ is
$2|\theta_{\mathrm{break}}\langle v_j|B|v_i\rangle|+O(\theta^2)$, so the gradient
of the ungated objective at the truth scales as $1/\theta_{\mathrm{break}}$. The
truth is then a singularity of that objective.
\end{prop}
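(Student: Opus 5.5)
The plan has three parts: first establish the perturbation formula, then compute the gap opened by the breaking direction, then read off the scaling. Write the model as $M_\theta(x)=O_0+xO_1+\theta_{\mathrm{break}}B+\dots$, so $\partial_\theta M$ along the breaking coordinate is $B$. For a simple level $i$ with eigenpair $(\lambda_i,v_i)$, differentiate $M v_i=\lambda_i v_i$ and fix the gauge by $\langle v_i|\partial_\theta v_i\rangle=0$. Projecting onto $v_k$ for $k\ne i$ then gives
\[
\partial_\theta v_i=\sum_{k\ne i}\frac{\langle v_k|\partial_\theta M|v_i\rangle}{\lambda_i-\lambda_k}\,v_k .
\]
Substituting into $\partial_\theta\langle v_i|A|v_i\rangle=2\,\mathrm{Re}\,\langle v_i|A|\partial_\theta v_i\rangle$ yields the displayed formula. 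The formula depends only on the projector $v_iv_i^\dagger$, so the residual phase freedom of $v_i$ does not matter. The $(i,j)$ term has numerator $\langle v_i|A|v_j\rangle\langle v_j|B|v_i\rangle$. For generic $A$ this is nonzero, since $A$ need not commute with $U$, and $B$ couples the sectors by hypothesis. Its denominator tends to zero as the gap closes, so this term diverges. This is exactly the case excluded by Proposition~\ref{prop:sym}: there the numerator vanished by Schur's lemma, whereas here the numerator is nonzero because $B$ is not in the commutant.

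Next, I compute the gap at $x^\ast$. At $\theta_{\mathrm{break}}=0$ the two levels are exactly degenerate, with $\lambda_i(x^\ast)=\lambda_j(x^\ast)=\lambda^\ast$. They are protected because $v_i$ and $v_j$ lie in different isotypic sectors, so $\langle v_j|O_0+x^\ast O_1|v_i\rangle=0$. I apply degenerate perturbation theory on the two dimensional space $\mathrm{span}\{v_i,v_j\}$, assuming no other level is degenerate there. To first order the effective Hamiltonian is
\[
\lambda^\ast I+\theta_{\mathrm{break}}\begin{pmatrix}B_{ii}&B_{ij}\\ \overline{B_{ij}}&B_{jj}\end{pmatrix}.
\]
Its eigenvalue splitting is $\theta\sqrt{(B_{ii}-B_{jj})^2+4|B_{ij}|^2}$. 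If $B$ is purely sector coupling, as the hypothesis ``$D=B$ couple the sectors'' suggests, then $PBP$ has zero diagonal within each sector. I will state this as the reading of the hypothesis, or else note that the diagonal part only enlarges the constant. The gap is then $2|\theta B_{ji}|+O(\theta^2)$, and the remainder comes from second order couplings through the other levels, which are gapped by assumption. This avoids the non-analyticity at the crossing because we work with the $2\times2$ block rather than with individual eigenvectors.

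Finally, I combine the two pieces. At $x^\ast$ and small $\theta\ne0$ the perturbed eigenvectors are fixed combinations of $v_i$ and $v_j$, namely the eigenvectors of the $2\times2$ block. The $(i,j)$ term therefore has an $O(1)$ numerator and a denominator of order $2|\theta B_{ji}|$. The per level loss gradient, for example from a squared residual weighted by a nonzero mismatch, thus scales as $1/\theta_{\mathrm{break}}$. The limits as $\theta\to0^\pm$ differ or blow up, so the ungated objective is not differentiable at the truth $\theta^\ast=0$.

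The main obstacle is this last step: showing that the numerator does not also vanish at rate $\theta$. Near $\theta=0$ the eigenvectors rotate discontinuously to the $B$-adapted basis, and a naive expansion could hide cancellations. I would handle this by computing the term exactly inside the $2\times2$ model. There the numerator is $\langle w_+|A|w_-\rangle\langle w_-|B|w_+\rangle$, where $w_\pm$ are the $\theta$-independent $B$-adapted eigenvectors, and this is generically nonzero. That gives the clean $1/\theta$ scaling, with the genericity condition on $A$ stated explicitly.
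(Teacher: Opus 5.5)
Your first two steps match the paper's: the standard simple-eigenvalue derivative formula, then the gap $2|\theta B_{ji}|+O(\theta^2)$ from the $2\times2$ restriction at $x^\ast$. Your handling of a possible diagonal part of $B$ is a harmless refinement of an assumption the paper makes tacitly.

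The gap is in your last step, exactly at the obstacle you flagged, and you resolve it the wrong way. Your vectors $w_\pm$ are eigenvectors, with distinct eigenvalues, of the restriction $\tilde B=PBP$ of $B$ to $\operatorname{span}\{v_i,v_j\}$. They lie in that span, so $\langle w_-|B|w_+\rangle=\langle w_-|\tilde B|w_+\rangle=0$ identically, not generically nonzero.

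For the exact eigenvectors of $M(x^\ast)+\theta B$ this element is $O(\theta)$. Write $w_\pm=w_\pm^{(0)}+r_\pm$ with $r_\pm=O(\theta)$. Since $M(x^\ast)-\lambda^\ast$ annihilates $w_\pm^{(0)}$, you get $\langle w_-|M(x^\ast)-\lambda^\ast|w_+\rangle=\langle r_-|M(x^\ast)-\lambda^\ast|r_+\rangle=O(\theta^2)$. Orthogonality of the exact eigenvectors gives $0=\langle w_-|M(x^\ast)+\theta B-\lambda^\ast|w_+\rangle$, hence $\theta\langle w_-|B|w_+\rangle=O(\theta^2)$.

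So at the crossing the $(i,j)$ term is $O(1)\cdot O(\theta)/O(\theta)$, which is finite. No genericity assumption on $A$ can rescue it, because the zero sits in the $B$ factor. The slip is conflating $\langle v_j|B|v_i\rangle\neq0$, in the sector basis, with the coupling in the model's own eigenbasis, which is what the formula uses once $\theta\neq0$.

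The paper's proof simply substitutes the gap into the $(i,j)$ term. However, its breaking-regime discussion in Section~\ref{sec:results} states this very cancellation: at the crossing alone the eigenvectors align with the breaking direction, the coupling vanishes at the rate of the gap, and the ratio stays finite. The paper attributes the $1/\theta_{\mathrm{break}}$ to the objective summed across the sweep.

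To repair your argument you must source the divergence away from the single point $x^\ast$. In your own $2\times2$ model, with $B_{ii}=B_{jj}$ as in your reading, take a sweep point whose detuning at $\theta=0$ is $\Delta=\lambda_i(x)-\lambda_j(x)$. There the rotation into the $B$-adapted basis is incomplete. The coupling-over-gap factor is $|B_{ij}|\,|\Delta|/(\Delta^2+4\theta^2|B_{ij}|^2)$, which reaches $1/(4\theta)$ at $|\Delta|=2\theta|B_{ij}|$. So the scaling holds for a sweep that resolves an $O(\theta)$ window around $x^\ast$.

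Alternatively, consider the gradient component along a direction that detunes the two sectors, such as $\partial M=xO_1$ with unequal sector slopes. Its matrix element $\langle w_-|x^\ast O_1|w_+\rangle$ survives in the $B$-adapted basis, and it does give $1/\theta_{\mathrm{break}}$ at $x^\ast$ itself.

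Your closing remark that the limits $\theta\to0^\pm$ differ is correct, since the eigenvalue ordering of $w_\pm$ flips with the sign of $\theta$. But that yields a discontinuity of the objective, not the $1/\theta_{\mathrm{break}}$ gradient law the statement asserts.
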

\begin{proof}
The displayed formula is the standard eigenvector derivative expansion for a
simple eigenvalue. For the gap, restrict $M(x^\ast)$ to the two dimensional
space spanned by $v_i$ and $v_j$. At $x^\ast$ the two sector eigenvalues
coincide, so the diagonal entries of that restriction are equal, and the model
contributes the off diagonal entry $\theta_{\mathrm{break}}\langle
v_j|B|v_i\rangle$. A two by two Hermitian matrix with equal diagonal entries and
off diagonal entry $\beta$ has eigenvalues separated by exactly $2|\beta|$, so
the crossing opens into an avoided crossing of gap
$2|\theta_{\mathrm{break}}\langle v_j|B|v_i\rangle|+O(\theta^2)$. Note that the
expansion is used here to obtain the leading behaviour of the gap itself, not to
perturb around a fixed nonzero gap, which is why it remains valid as the gap
closes. Substituting this gap into the $(i,j)$ term of the sum produces the
$1/\theta_{\mathrm{break}}$ scaling.
\end{proof}

\section{Related work and positioning}\label{sec:related}
Every individual leg of this construction is prior art, and we claim none of
them. What we claim is the coupling, and one device inside it. The argument of
this section is that each leg, taken alone, leaves the fitting problem broken in
a specific way, and that closing the gap between them is what the gate does.

Spectral emulators supply the setting. Eigenvector continuation \cite{Frame2018}
introduced subspace emulation of parametric eigenvalue problems, and parametric
matrix models \cite{Cook2025} made the matrices themselves trainable. Neither
addresses what the fitted target means at a forced multiplet, which by
Proposition~\ref{prop:sym} is not a well defined quantity. The Bayesian
extension \cite{BPMM2025} adds uncertainty quantification and reports graceful
degradation in near degenerate regimes, but explicitly does not treat extensive
symmetry forced degeneracy, which is the regime here. That is the nearest
competitor system, and the relation is a distinction of scope rather than a
benchmark victory; our baselines in Section~\ref{sec:bias} are the naive per
level and gap threshold policies, and beating them is not beating the Bayesian
model.

Symmetry adapted eigencomputation supplies the block structure. Block
diagonalizing an equivariant matrix by irrep through the generalized Fourier
transform, with multiplicities forced by symmetry, is established numerical
linear algebra \cite{AMK2005,AMK2006} and was textbook physics before that. It
requires the group action itself, since the transform is built from it, and it
produces a decomposition rather than a decision about any particular degeneracy
met during a sweep. Our requirement is weaker but it is not nothing: the group,
its representation and the embedding are withheld and never reconstructed, and
the block structure is read from the commutant of the observed operators, but
the commutant dimension and the sector count are supplied as hints
(Section~\ref{sec:assumptions}), and the synthetic construction is what makes
the resulting verdicts checkable against a known answer.

Simultaneous block diagonalization supplies the recovery, and this is the
closest prior art to our Section~3. The line of Murota, Kanno, Kojima and Kojima
and of Maehara and Murota \cite{MKKK2010,MM2010,MM2011} recovers the block
structure from the observed matrices alone, and \cite{MM2011} in particular
builds the decomposition from the commutant algebra with an explicit error
control parameter. We do not claim the recovery step; it is theirs, and our
Section~3 is best read as rediscovering, from the failure side, why the
commutant is the right object. What that literature does not do, because it has
no reason to, is decide whether a degeneracy met at a given point of a parameter
sweep is forced by symmetry or is an accident of tuning, and feed that decision
to a gradient.

That decision is the gap the coupling closes, and it is where the construction
earns its place. A degeneracy that is forced and one that is accidental look
identical to any policy that reads only the spectrum, since both present as a
small gap, yet they demand opposite treatment: the forced pair must be tracked
as a projector, because its individual levels carry no defined observable, while
the accidental pair must be tracked individually, because its levels are
genuinely distinct and lumping them destroys real physics. Section~\ref{sec:bias}
shows that no single gap threshold serves both, and that the error from choosing
wrongly is not a small correction. Reading the distinction from the centre of
the recovered commutant makes it structural rather than metric, so it survives
noise an order of magnitude past where spectral clustering fails, and it is the
discrete decision on which the piecewise differentiable objective of
Section~3.3 depends.

Learned and approximate symmetry form the surrounding neighbourhood. Symmetry
adapted Hamiltonian regression in irrep blocks, equivariant operator learning
with irrep block mappers and spectral objectives, post hoc learned simultaneous
block diagonalization, isotypic decomposition learning, projector based
symmetric reduced order models, and degeneracy aware gradients through gauge or
$\varepsilon$ regularization tricks are all active as of the middle of 2026. A
targeted search found no work in which the commutant structure is the device
that makes the degenerate sector gradient well posed, and none containing a
protected versus accidental crossing detector in a differentiable setting. That
absence is evidence rather than proof, since the neighbourhood is dense and
moving. Novelty is therefore claimed for the fused combination, with the
commutant based forced versus accidental gate as the sharpest single defensible
piece, under the structural hints stated in Section~\ref{sec:assumptions}.

\section{Method}
\subsection{The symmetry structure is the linear commutant}
A first design recovered the homomorphism $\hat U(g)$ by a homomorphism
constrained commutator fit, a nonconvex optimization over the unitary group
$U(n)$. This is the wrong object, in an instructive way. Two facts settle the
matter. First, a symmetric operator is a scalar on each irrep, so the
eigenstructure of the operator family is blind to the forced versus accidental
distinction; the nonscalar part of a forced block under the operators is at the
level of $10^{-16}$, while under the group action it is of order one. Second, the
object the gate needs is the commutant of the family,
\[
\mathcal{A}'=\{B=B^\dagger:\comm{B}{\tilde O_i}=0\ \text{for all}\ i\},
\]
one singular value decomposition nullspace of the linear commutator map. When the
generators generate $\End_G(V)$, discussed in Section~\ref{sec:assumptions}, the
double commutant theorem gives $\mathcal{A}'$ as the group algebra image, of
dimension $\sum_{\rho\,\mathrm{present}}d_\rho^2$, carrying exactly the
irreducibility structure the gate reads. The homomorphism recovery solved a
strictly harder problem than required, and it alone produced two false verdicts
in earlier versions of this work: a recovery wall at $n=24$, taking more than two
hundred seconds without converging, and an apparent fragility of the gate, at
twelve correct rejections out of twenty. Both dissolved under the linear
commutant, at about two seconds at $n=24$, and at twenty correct out of twenty on
both verdict classes through $\varepsilon=0.05$. Given \cite{MM2011}, this section
is a rediscovery, from the failure side, of why the commutant is the right
recovery object, and not a proposal of it.

\subsection{Block identity from the centre of the commutant}\label{sec:blockid}
Identifying blocks by clustering the noisy spectrum sets the noise ceiling at the
eigenvalue spacing and merges unrelated levels; at $\varepsilon\ge0.02$ energy
clustering misidentifies the forced block in ninety eight to one hundred percent
of seeds. Instead we proceed in three steps.

First, isotypic projectors are read from the centre of $\mathcal{A}'$, a second
linear nullspace. The only clustering performed anywhere happens on a generic
central element, whose eigenvalue gaps are of order one regardless of the
physical spectrum, and we take the best of five draws by cluster separation.

Second, the irrep dimension $d_\rho$ of each component is read from the
restricted algebra $\{P_\rho A P_\rho : A\in\mathcal{A}'\}$, whose dimension is
$d_\rho^2$ for an irreducible component. We count the singular values of the
restricted algebra above $0.25\,s_{\max}$, take the square root, and snap to the
nearest divisor of the component size. Two properties of this estimator are worth
stating, because together they make the empirical threshold far less load bearing
than it appears. The count is insensitive to the threshold fraction over the
range $0.20$ to $0.30$, since the signal singular values are of order one while
the noise singular values are of order $\varepsilon$. More sharply, in every case
we have tested, across the $S_3$ and $S_4$ regular representations and both non
regular families of Section~\ref{sec:nonreg}, at every noise level reported here,
the count lands exactly on $d^2$ for a divisor $d$ of the component size, so the
snap is inactive and the estimator is not in practice making a borderline
decision. Across one hundred and forty seven measured components the mean
displacement caused by the snap is $0.0096$, and it is exactly zero on one
hundred and forty five of them. The two exceptions both occur at
$\varepsilon=0.3$ on $S_3$, at the ceiling documented below, and both follow an
upstream failure of the sector split rather than any ambiguity in the dimension
estimate: the components handed to the estimator have sizes two and three, which
are not sector sizes of the $S_3$ regular representation at all. The divisor snap
is therefore a safeguard against malformed input rather than a tie breaking rule,
and we make no claim about its behaviour when a count falls genuinely between two
divisors, since we have not observed that case.

Third, multiplets are read by count: within a component the restricted
eigenvalues come in exactly $d_\rho$ fold groups, and sorted eigenvalues are
taken $d_\rho$ at a time, with no tolerance.

The consequences are that the noise ceiling becomes the ceiling of the commutant,
at $\varepsilon\approx0.3$, and that the gate becomes structural. A cross sector
coincidence consists of blocks in different isotypic components, which never
merge, and at a protected crossing the pipeline reports two blocks of dimension
one at the same energy, with nothing left to decide. A multiplet with
$d_\rho\ge2$ is forced, since the restricted representation is irreducible and
the restricted commutant has dimension one. The criterion is irreducibility and
not the weaker statement that the block acts nonscalarly, since two distinct one
dimensional irreps merged by accident are also nonscalar.

\subsection{Fitting policy}
Observables are estimated per commutant identified block as $\Tr(P_bA)$. Fitting
data are treated the same way. Through a forced block the loss uses projector
traces, whose gradient sums only over states external to the block and so cancels
the internal $1/(\lambda_i-\lambda_j)$. Elsewhere the loss uses per sector
energies and observables. Within a window $\tau$ of a protected crossing the gate
fires locally, so the crossing pair contributes its pair projector trace while the
rest contributes per sector data. The objective is piecewise differentiable given
the discrete decision of the gate. The block diagonalisation is recovered, not
learned, and the trained quantities are the low dimensional spectral parameters
$\theta$.

\section{Results}\label{sec:results}
\paragraph{Provenance of the numbers.}
All numbers come from one released deterministic artifact, an eight stage pipeline running in about seven seconds on a single core of a laptop, together with three diagnostic scripts released alongside it: one measuring the dimension estimator of Section~\ref{sec:blockid}, one locating the single family B failure discussed in Section~\ref{sec:nonreg}, and one measuring the dispersion, initial condition dependence, and misclassification cost reported below. The reference run is
designated with the release.

Two forms of machine dependence are worth recording, because they have different
causes and because the pattern they make is itself evidence for the paper's
thesis. The gauge dependent quantities, namely the per level and naive baseline
outcomes, depend on the arbitrary basis chosen by the eigensolver inside a
degenerate subspace, and they vary across linear algebra library builds; this is
the pathology of Proposition~\ref{prop:sym} appearing in the build system. Their
gated counterparts, computed from projector traces, are invariant and agree
across builds to every digit reported. In the gradient sweep below, for instance,
the naive column shifts by roughly a decade between two machines while the gated
column reproduces exactly, including its fitted exponent. Separately, where a
quantity is a count of seeds falling on one side of a classification boundary,
and the noise level places that boundary near the seeds themselves, last bit
differences between builds move a small number of seeds across it; this affects
only the misclassification counts at $\varepsilon=0.4$ reported below, which we
therefore give as an onset rather than as a fraction. Gauge dependent entries are
reported qualitatively rather than as digits, and every other quantity here
reproduces across machines.

\paragraph{Block identification.}
On the $S_3$ regular representation, at $n=6$, the full structure is recovered on
fifteen of fifteen seeds at $\varepsilon=0.1$, $0.2$ and $0.3$. On $S_4$, at
$n=24$, recovery is correct at $\varepsilon=0$, $0.02$ and $0.05$, at eight of
eight seeds at $\varepsilon=0.05$, returning the block dimensions
$[1,1,2,2,3,3,3,3,3,3]$. Energy clustering fails at $\varepsilon=0.02$ on the
same instances. The $S_4$ noise range is narrower than the $S_3$ range
deliberately: the cost of the noise sweep grows with $n$, and the purpose of the
$S_4$ study is to establish that the method scales at all rather than to locate
its ceiling there, which we have not done.

\paragraph{The gate is structural.}
At the protected crossing the pipeline reports two blocks of dimension one
sharing an energy to three digits, and they remain separate rather than merging,
because block identity is assigned by isotypic sector rather than by proximity in
the spectrum. The forced versus accidental distinction is therefore built into
the block identification itself, and is not a subsequent test applied to a
candidate degeneracy.

\paragraph{Fitting in the symmetric regime.}
With $M(x;\theta)=O_0+\theta xO_1$ and the target taken as the projector trace of
the lowest forced doublet, which has multiplicity two and so depends on $\theta$,
the gated fit reaches $\theta=0.400000$. This is not a property of a single
starting point: from seven initial values spanning $\theta_0=0$ to $\theta_0=0.8$
the fit converges every time, with a maximum absolute error of $8\times10^{-16}$
across all seven, which is machine precision. A multiplicity one sector would be
frozen by Schur's lemma and would give an exactly flat loss, a fact that surfaced
three times during development. The per level fit on the same data lands at
$\theta=0.196$ or $\theta=1.123$ depending only on which vector of the degenerate
subspace the eigensolver happens to return, which is Proposition~\ref{prop:sym}
in its simplest form: the target itself is not defined, so the fit is biased by
an amount that carries no physical meaning. These two values are themselves build specific and are given only to indicate
the size of the spread; which of them appears depends on the eigensolver, which
is the point.

\paragraph{Fitting in the breaking regime.}
With truth
$(\theta_{\mathrm{sym}},\theta_{\mathrm{break}})=(0.3,0)$, a model that carries a
genuine breaking direction, initialisation at $(0,0.2)$, and data spanning the
protected crossing where only pair sums are measurable at exact degeneracy, the
gated fit, firing within $\tau=0.08$ of the crossing and using per sector data
outside it, returns $(+0.30000,-0.000000)$. The naive per level fit returns
$(+0.18533,-0.042485)$, an error of $0.115$, and this value is gauge dependent.

The gradient behaviour at the truth is the quantitative content of
Proposition~\ref{prop:break}, and we sweep it over five decades. The naive
gradient reads $1.6\times10^{1}$, $2.8\times10^{1}$, $3.0\times10^{2}$,
$3.0\times10^{3}$ and $3.0\times10^{4}$ at $\theta_{\mathrm{break}}=10^{-1}$
through $10^{-5}$. The last three decades are the asymptotic regime in which the
first order expansion of Proposition~\ref{prop:break} applies: there the leading
digits are stable at $2.96$ and each decade of $\theta_{\mathrm{break}}$
multiplies the gradient by ten, giving an exponent of $-1$ to three digits. The
two largest values of $\theta_{\mathrm{break}}$ lie outside that regime, which is
what pulls a five point fit to $-0.856$. Over the same sweep the gated gradient
reads $1.73\times10^{0}$ down to $1.73\times10^{-4}$, a fitted exponent of
$+1.000$ across all five decades. It therefore does not merely remain bounded as
the truth is approached but vanishes linearly in the distance from it, which is
the behaviour of a smooth minimum. The truth is a singularity of the naive
objective and an attracting fixed point of the gated one. The result is sharp
because the truth is exactly symmetric; a near symmetric truth makes the naive
objective ill conditioned rather than singular. We note also that the divergence
is a property of the training objective summed across the sweep rather than of a
single point evaluation at the crossing: evaluated at the crossing alone the
eigenvectors rotate to align with the breaking direction inside the degenerate
subspace, so the coupling matrix element vanishes at the same rate as the gap and
the ratio stays finite.

\paragraph{In loop bias.}\label{sec:bias}
The table reports the mean absolute bias over forty noise seeds at
$\varepsilon=0.02$, on the forced doublet, whose true projector trace is
$-2.375$, and on an accidental degenerate cross sector pair, whose two levels
carry observables $0.814$ and $1.017$ and so are separated by a spread of $0.203$
while sharing an energy at the crossing. Standard deviations across seeds are in
parentheses. The two failure modes of the baselines are separated: the policy
rows hand the baselines oracle block locations, so that identification failure is
charged on its own line below.
\begin{center}
\begin{tabular}{lcc}
\toprule
policy & forced & accidental\\
\midrule
$\varepsilon$ regularized, per level, oracle blocks & 1.065 (0.561) & 0.660 (0.424)\\
gap threshold, oracle blocks & 0.058 (0.033) & 0.203 (0.000)\\
commutant gate, no oracle & \textbf{0.018 (0.013)} & \textbf{0.016 (0.011)}\\
\bottomrule
\end{tabular}
\end{center}
The advantage of the gate lies well outside the scatter: on the forced sector it
separates from the nearest baseline by seven standard errors of the difference of
means. The accidental bias of the gap threshold policy has zero variance across
seeds, because it is not a noise effect at all but a deterministic consequence of
the policy: lumping two distinct levels to their average incurs exactly the pair
spread of $0.203$, matching the tabulated value to three digits. The bias of the
gate is of order $\varepsilon$ on both columns; at $\varepsilon=0.05$ it reads
$0.044$ and $0.040$, where the same two baselines give $1.058$ and $0.657$, and
$0.160$ and $0.203$. Energy clustering alone misidentifies the forced block in
ninety eight percent of seeds at $\varepsilon=0.02$ and in every seed at
$\varepsilon=0.05$, whereas the gate needs no oracle. These baselines are the
naive options, deliberately handicapped in their favour, and they are not the
state of the art, as noted in Section~\ref{sec:related}.

\paragraph{Behaviour at and beyond the ceiling.}
Because the gate makes a discrete decision, the cost of a wrong decision matters
as much as the frequency of one. Over forty seeds the gate classifies correctly
in every case through $\varepsilon=0.3$, and its forced sector bias grows
smoothly with noise, from $0.044$ at $\varepsilon=0.05$ to $0.334$ at
$\varepsilon=0.3$, against a per level baseline that sits between $1.06$ and
$1.62$ across the same range. The first misclassifications appear at
$\varepsilon=0.4$, beyond the ceiling claimed here. On the seeds where the gate
misidentifies at that level, the resulting bias is approximately $1.3$, against
approximately $1.6$ for the per level baseline on the same seeds. The failure is
therefore graceful in the relevant sense: a wrong verdict costs roughly what
abandoning the gate entirely would have cost, rather than producing an error
worse than the baseline it replaces.

\paragraph{Scale.}
Every step is a singular value decomposition or an eigenproblem, and nothing
optimizes. At $n=24$ the commutant, the centre, the block identification and the
gate together take under half a second per noise level on a single core. The full pipeline, including every noise sweep and both forty seed bias tables,
completes in about seven seconds.

\section{Off the regular representation}\label{sec:nonreg}
A method that reads structure off an algebra can be wrong in a way that no amount
of testing on a group's regular representation will reveal, because in that
representation every irrep appears exactly as many times as its own dimension, so
an estimator that computes the multiplicity where it should compute the dimension
returns the correct answer for the wrong reason, every time. The families in this
section exist to run the test the regular representation cannot, and two earlier
estimators died on it.
Two hand built $S_3$ irrep families separate the two quantities. Family A is
$\mathrm{triv}\oplus\mathrm{std}\oplus\mathrm{std}$ on $\mathbb{C}^5$, with the
sign irrep absent, giving commutant dimension five and blocks $[1,2,2]$. Family B
is $\mathrm{triv}\oplus\mathrm{triv}\oplus\mathrm{std}$ on $\mathbb{C}^4$, also
of commutant dimension five, with blocks $[1,1,2]$.

Family B is the discriminator, and the reason is visible in its components. Its
two isotypic components have the \emph{same} size, two, but carry different irrep
dimensions: the restricted algebra of one has rank four, giving $d=2$, and that
of the other has rank one, giving $d=1$. An estimator that inferred the irrep
dimension from the component size, which is what the regular representation
silently permits, returns the same answer for both and is wrong on one of them.
No regular representation instance can present this configuration.

\begin{center}
\begin{tabular}{llccc}
\toprule
family & content & $\varepsilon{=}0.1$ & $0.2$ & $0.3$\\
\midrule
A & $\mathrm{triv}\oplus\mathrm{std}\oplus\mathrm{std}$, sign absent &
15/15 & 15/15 & 15/15\\
B & $\mathrm{triv}\oplus\mathrm{triv}\oplus\mathrm{std}$,
$m\ne d$ everywhere & 15/15 & 15/15 & 14/15\\
\bottomrule
\end{tabular}
\end{center}

Family A also fits through its forced doublet to $\theta=0.400000$. Family B is
not trained, and the reason is predicted by the theory rather than discovered in
the attempt: its standard sector has multiplicity one, so by
Proposition~\ref{prop:sym} the restricted model direction is scalar there and the
loss is exactly flat in $\theta$. Proposition~\ref{prop:sym} therefore tells us
in advance which sectors carry trainable signal, and family B's does not; only
block identification and the gate are claimed for it.

Family B falsified two estimators before passing. The first was a rank reader
that stripped exact zero singular values before measuring spectral drops, so that
the noiseless case failed while the noisy cases passed, since the regular
representation runs had survived on the good luck of divisor structure. The
second was a within versus between gap scorer that fixed family B but regressed
the $S_3$ case to three of fifteen at $\varepsilon=0.3$. The surviving estimator,
described in Section~\ref{sec:blockid}, won a head to head sweep across all
families; the diagnostic reported there, showing that the singular value count
lands exactly on a divisor and the snap is inactive, includes family B at every
noise level, which is the case it was built to stress.

The single miss at fourteen of fifteen admits a mechanical explanation rather
than being left as tail behaviour. On the one failing seed the centre based
sector split returns components of sizes one and three, where the correct sectors
both have size two. The dimension estimator is then handed a component of size
three whose restricted algebra has rank five, giving $\sqrt5=2.236$, and since
two does not divide three it returns three. It could not have returned the right
answer, because the input was already wrong. The failure is therefore upstream of
the dimension estimator, in the sector split. In every failure we have examined,
here and in the two exceptions noted in Section~\ref{sec:blockid}, the sector
split is what breaks first, which locates the $\varepsilon\approx0.3$ ceiling in
a specific step rather than in the method as a whole. We have examined three such
failures and do not claim this is universal.

The methods lesson is that regular representation validation is structurally
incapable of distinguishing multiplicity from dimension, and that spectral
structure estimators must be tested off it.

\section{Load bearing assumptions}\label{sec:assumptions}
Three assumptions carry the method and are larger than footnotes.

Genericity of the generators. The identification of $\mathcal{A}'$ with the group
algebra image requires the observed generators to generate $\End_G(V)$. Two
generic draws did so in every family tested, on the $S_3$ and $S_4$ regular
representations and on families A and B, consistent with the genericity of pairs
generating a semisimple algebra. We have not characterized when two generators
suffice in general, and structured physical generators, which are sparse and
local, need not be generic. Failure is detectable, since the recovered commutant
dimension comes out too large, but it is not yet handled.

Structural hints. The commutant dimension and the number of isotypic sectors are
supplied to the recovery rather than read from the data. Both appear as
truncation ranks: the commutant is taken as the nullspace of the commutator map
of a given dimension, and the centre as a nullspace of a given dimension within
it. Both are in principle readable from the singular value spectra of those same
maps, and in the noiseless case the separation is exact. Under noise the separation narrows steadily, and by $\varepsilon\approx0.3$ the
gap across the true boundary is no larger than the spacing between adjacent
noise lifted singular values. Reading the ranks rather than supplying
them would therefore impose a noise ceiling of its own, below the
$\varepsilon\approx0.3$ at which the rest of the pipeline still operates, so the
hints are not an incidental convenience. This is the sense, and the only sense,
in which symmetry information enters the method: the group, its representation
and its embedding are withheld and never reconstructed, but the two integers that
fix the shape of the decomposition are given. Characterizing the reading rule and
its own ceiling is left to future work.

Noise model. All robustness numbers use independent perturbations of the
generators from a Gaussian unitary type ensemble. Real operator estimation noise
is correlated, through shared measurement channels and systematic errors, and the
ceiling near $\varepsilon\approx0.3$ may be optimistic off this model. No claim is
made for correlated noise.

\section{Conditions, boundaries and open items}
The construction is synthetic by design; the known symmetry answer keys are what
make the verdicts checkable, and choosing and validating a physical target is a
separate and subsequent question. The demonstrated object is a gated estimator
with low dimensional trained parameters. A full parametric matrix model, in which
the matrices $M_i$ are themselves learned and the symmetry structure emerges in
the fit, is not demonstrated here; it is the immediate next experiment, and until
it runs the claims should be read at the level of the estimator and not the full
model. The singularity at the truth result requires the truth to be exactly
symmetric. The threshold of the dimension estimator is empirical, but its
influence is smaller than that word suggests: across the one hundred and forty
seven isotypic components measured over all families and noise levels, the
restricted algebra singular value count lands exactly on $d^2$ for a divisor $d$
of the component size, so the estimator is not in practice resolving a borderline
case, and the count is in any event insensitive to the threshold fraction over
the range $0.20$ to $0.30$. The noise ceiling near $\varepsilon\approx0.3$ is set
by a specific step: in every failure we have examined, three in total, the centre
based isotypic sector split returns wrong component sizes and the dimension
estimator is handed input on which no correct answer exists, so work aimed at
extending the ceiling should begin there. The $S_4$ noise range is shown to
$\varepsilon=0.05$ and the large $n$ ceiling is not established. The mutual
coincidence of two multiplets of the same dimension remains the hard case,
requiring window or character information. The families off the regular
representation are built from $S_3$ irreps. The objective is piecewise
differentiable only, given the gate. Finally, two distinct forms of machine
dependence affect reported numbers, as recorded in Section~\ref{sec:results}: the
gauge dependent baseline quantities vary across eigensolver builds, which is
Proposition~\ref{prop:sym} appearing in the build system, and seed counts taken
at a classification boundary move by a small number of seeds across builds. All
gauge invariant results reproduce.

\section{Summary}
On synthetic families with genuine nonabelian forced multiplets, under
independent operator estimation noise, with the structure recovered inside the
loop by linear algebra, we obtain the following. Block identification is robust an
order of magnitude past the point where energy clustering dies. The forced versus
accidental gate is structural, since blocks in different isotypic sectors never
merge however close their energies come. Parameter fitting reaches machine
precision in both the gauge ambiguity regime and the genuine singularity regime,
from every initial condition tried, and in the latter the gated gradient does not
merely stay bounded at the truth but vanishes linearly in the distance from it,
so that the point from which the ungated objective is repelled is an attracting
fixed point of the gated one. Observable bias sits at the noise floor against
oracle assisted naive baselines, by a margin of seven standard errors. Beyond the
claimed ceiling the gate degrades gracefully rather than catastrophically: the
first misclassifications appear at a noise level half again above the ceiling,
and where they occur they cost roughly what abandoning the gate entirely would
have cost. Generality is demonstrated off the regular representation, by tests
that the regular representation is structurally unable to run. Each conceptual
risk met during construction was resolved by removing something unnecessary,
namely the homomorphism fit, the spectrum clustering, and two defective
estimators, rather than by adding machinery. Between this proof of concept and a
physical application stand the full parametric matrix model experiment and the
choice of target system.

\section*{Code availability}
The pipeline, the three diagnostic scripts, and the frozen output of the
designated reference run are archived at
\url{https://doi.org/10.5281/zenodo.21808776}, and developed at
\url{https://github.com/rrumabo/commutant_gate}. Every number reported here is
produced by one of those scripts; the mapping from claim to script and output is
given in the repository README.

\end{document}